\theoremstyle{plain}
\newtheorem*{thm}{Theorem}
\newtheorem{lem}{Lemma}
\theoremstyle{definition} 
\newtheorem*{defn*}{Definition}
\newtheorem*{ack}{Acknowledgements}
\theoremstyle{remark}
\newtheorem*{rem*}{Remark}
 \newcommand{\V}[1]{\Vert #1 \Vert} 
\newcommand{\Czero}[1]{C_0(#1)}
\newcommand{\CzeroPlus}[1]{C_0^+(#1)}
\newcommand{\TildeT}{\tilde{T}}
\begin{document}

\title[Additive and multiplicative maps]
{Additive and multiplicative maps in norm on the positive cone of continuous function algebras}

\author[T. Miura]{Takeshi Miura}
\address[T. Miura]
{Department of Mathematics,
Faculty of Science,
Niigata University,
Niigata 950-2181, Japan}
\email{miura@math.sc.niigata-u.ac.jp}

\author[N.~Shibata]{Natsumi Shibata}
\address[N. Shibata]{Graduate School of Science and Technology,
Niigata University,
Niigata 950-2181, Japan}
\email{f25a056h@mail.cc.niigata-u.ac.jp}

\keywords{Banach--Stone theorem, composition operator, Gelfand--Kolmogoroff theorem, positive cone}
\subjclass[2020]{46E25, 46B04, 46J10, 47B33}

% --- Abstract ---
\begin{abstract}
Let $X$ and $Y$ be locally compact Hausdorff spaces. We denote by $\CzeroPlus{X}$ the positive cone of all real-valued continuous functions on $X$ vanishing at infinity. In this paper, we consider a bijection $T\colon \CzeroPlus{X} \to \CzeroPlus{Y}$ satisfying the following two norm conditions for all $f, g \in \CzeroPlus{X}$:
\[
    \V{T(f+g)} = \V{T(f)+T(g)},\qquad
    \V{T(f \cdot g)} = \V{T(f) \cdot T(g)}.
\]
The main result of this paper is that such a map $T$ is a composition operator of the form $T(f) = f \circ \tau$, induced by a homeomorphism $\tau\colon Y \to X$.
\end{abstract}

\maketitle

\section{Introduction and main results}

Throughout this paper, let $X$ and $Y$ be locally compact Hausdorff spaces.
We denote by $\Czero{X}$ the Banach algebra of all real-valued continuous functions on $X$ vanishing at infinity, equipped with the supremum norm $\V{\cdot}$.
Note that if $X$ is compact, $\Czero{X}$ coincides with $C(X)$, the Banach algebra of all real-valued continuous functions on $X$.
The classical Banach--Stone Theorem \cite{BanachStone,Stone} asserts that for compact Hausdorff spaces $X$ and $Y$, $C(X)$ is linearly isometric to $C(Y)$ if and only if $X$ is homeomorphic to $Y$.
It is well known that this result extends to locally compact Hausdorff spaces $X$ and $Y$ (see, e.g., \cite{Wang95}).
Specifically, every surjective linear isometry $U\colon \Czero{X} \to \Czero{Y}$ is of the form
$$
U(f)(y) = \alpha(y) f(\tau(y)) \quad (f \in \Czero{X},\, y \in Y),
$$
where $\alpha\colon Y \to \{-1, 1\}$ is a continuous function and $\tau\colon Y \to X$ is a homeomorphism.
The Banach--Stone theorem states that the metric structure of $\Czero{X}$ determines the topological structure of $X$.

It is a natural question whether the algebraic structures of $\Czero{X}$ determine the topological structure of $X$.
Let $T\colon \Czero{X} \to \Czero{Y}$ be a bijective map.
We call $T$ a \textit{ring isomorphism} if $T$ satisfies the following conditions for all $f, g \in \Czero{X}$:
$$
T(f+g) = T(f) + T(g),\qquad
T(f \cdot g) = T(f) \cdot T(g).
$$
Gelfand and Kolmogoroff \cite{GK} proved that for compact Hausdorff spaces $X$ and $Y$, every ring isomorphism $T\colon C(X) \to C(Y)$ induces a homeomorphism between $X$ and $Y$.
Therefore, the algebraic structure of $C(X)$ determines the topological structure of $X$ by the Gelfand--Kolmogoroff theorem.

Recently, Dong, Lin, and Zheng \cite{GK-orig} weakened the algebraic condition of the Gelfand--Kolmogoroff theorem by introducing the notion of a \textit{ring isomorphism in norm} between $C(X)$ and $C(Y)$.
We call $T\colon C(X) \to C(Y)$ a ring isomorphism in norm if it is a bijective map that satisfies the following conditions for all $f, g \in C(X)$:
$$
\V{T(f+g)} = \V{T(f) + T(g)},\qquad
\V{T(f \cdot g)} = \V{T(f) \cdot T(g)}.
$$
The authors of \cite{GK-orig} proved that such a map $T$ is also a weighted composition operator induced by a homeomorphism.

In this paper, we consider the positive cone $\CzeroPlus{X}$ of $\Czero{X}$ defined by
$$
\CzeroPlus{X} = \{ f \in \Czero{X} \mid f(x) \ge 0 \text{ for all } x \in X \}.
$$
We investigate a bijective map
$T\colon \CzeroPlus{X} \to \CzeroPlus{Y}$ satisfying the above two norm conditions for all $f, g \in \CzeroPlus{X}$.
Our main theorem is as follows:

\begin{thm}
Let $X$ and $Y$ be locally compact Hausdorff spaces.
Let $T\colon \CzeroPlus{X} \to \CzeroPlus{Y}$ be a ring isomorphism in norm, that is, bijective map that, for all $f, g \in \CzeroPlus{X}$, satisfies
\begin{align}
    \V{T(f+g)} &= \V{T(f)+T(g)}, \label{eq:1} \\
    \V{T(f \cdot g)} &= \V{T(f) \cdot T(g)}. \label{eq:2}
\end{align}
Then, there exists a homeomorphism $\tau\colon Y \to X$ such that
$$
T(f)(y) = f(\tau(y)) \quad (f \in \CzeroPlus{X},\, y \in Y).
$$
\end{thm}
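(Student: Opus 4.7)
To begin, I would observe some elementary consequences of \eqref{eq:1} and \eqref{eq:2}. Setting $f = g = 0$ in \eqref{eq:2} gives $\V{T(0)} = \V{T(0)}^2$, so $\V{T(0)} \in \{0,1\}$; setting $f = g = 0$ in \eqref{eq:1} then gives $\V{T(0)} = 2\V{T(0)}$, whence $T(0) = 0$. Iterating \eqref{eq:1} with equal summands yields $\V{T(nf)} = n\V{T(f)}$ for $n \in \N$, and iterating \eqref{eq:2} yields $\V{T(f^n)} = \V{T(f)}^n$. Also, $fg = 0$ forces $\V{T(f) T(g)} = 0$, so $T(f)T(g) = 0$; thus $T$ preserves disjoint supports.

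The main obstacle, which I expect to absorb the bulk of the technical work, is proving the exact norm identity $\V{T(f)} = \V{f}$. Write $M(f) = \{x \in X : f(x) = \V{f}\}$ for the (compact and non-empty when $\V{f} > 0$) peaking set of $f$. My approach uses Urysohn peaking companions: for $f$ with $\V{f} = c > 0$ and $x_0 \in M(f)$, pick $p \in \CzeroPlus{X}$ with $p(x_0) = 1 = \V{p}$ supported in a small compact neighbourhood of $x_0$, so that $\V{p^n f} = c$ for every $n$. Applying \eqref{eq:2} iteratively links $\V{T(p)}$, $\V{T(f)}$, and $\V{T(p^n f)}$ multiplicatively, and running the symmetric construction on $Y$ through $T^{-1}$ should pin down $\V{T(p)} = 1$, whence $\V{T(f)} = c$. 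The difficulty is that \eqref{eq:1} and \eqref{eq:2} record only scalar data, so recovering pointwise magnitudes demands a careful interplay between Urysohn approximations on both sides.

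Once norm preservation is established, $\V{fg} = \V{f}\V{g}$ becomes equivalent to $M(f) \cap M(g) \ne \emptyset$, and combining this with \eqref{eq:2} (plus the symmetric argument for $T^{-1}$, which also satisfies \eqref{eq:1} and \eqref{eq:2} once norms are preserved) yields $M(f) \cap M(g) \ne \emptyset \iff M(T(f)) \cap M(T(g)) \ne \emptyset$. For each $y \in Y$ I would then consider
\[
    \mathcal{F}_y = \{ M(f) : f \in \CzeroPlus{X},\ T(f)(y) = \V{T(f)} > 0 \},
\]
which by iterating the above equivalence has the finite intersection property on compact subsets of $X$, so $K_y := \bigcap \mathcal{F}_y$ is non-empty. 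Using Urysohn peak functions at arbitrary points of $Y$ pulled back via $T^{-1}$ should exhibit members of $\mathcal{F}_y$ with peaking sets in arbitrarily small neighbourhoods of a single point, forcing $K_y = \{\tau(y)\}$ for a well-defined $\tau \colon Y \to X$. Taking a net of shrinking Urysohn peak functions $u_\lambda$ at $\tau(y)$, the limit $\V{u_\lambda f} \to f(\tau(y))$ combined with \eqref{eq:2} and the peaking of $T(u_\lambda)$ at $y$ yields $T(f)(y) = f(\tau(y))$; continuity of $\tau$ and $\tau^{-1}$ then follows by standard LCH arguments using compactly supported Urysohn functions on both sides.
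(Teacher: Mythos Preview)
Your sketch has a genuine gap at the norm-preservation step, and everything afterwards depends on it. Condition \eqref{eq:2} gives $\V{T(fg)} = \V{T(f)\cdot T(g)}$, the sup-norm of the \emph{pointwise product}, not $\V{T(f)}\cdot\V{T(g)}$; these coincide only when $T(f)$ and $T(g)$ share a peak point, which is precisely what you are trying to establish. Thus your ``multiplicative link'' between $\V{T(p)}$, $\V{T(f)}$ and $\V{T(p^nf)}$ delivers at best the one-sided bound $\V{T(p^nf)} \le \V{T(p)}^n\V{T(f)}$, and nothing in the argument forces $\V{T(p)}=1$. (For the same reason, your claim $\V{T(f^n)}=\V{T(f)}^n$ is only immediate for $n=2^k$.) The appeal to ``the symmetric construction through $T^{-1}$'' is circular: $T^{-1}$ is not known to satisfy \eqref{eq:1} or \eqref{eq:2} until $T$ is already additive or norm-preserving --- for instance, $\V{T^{-1}(u+v)}=\V{T^{-1}(u)+T^{-1}(v)}$ would need $T^{-1}(u+v)=T^{-1}(u)+T^{-1}(v)$. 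More broadly, you use \eqref{eq:1} only for $T(0)=0$ and $\V{T(nf)}=n\V{T(f)}$, essentially discarding the hypothesis that carries the real force.

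The paper's route is quite different. It invokes a theorem of Hirota: condition \eqref{eq:1} alone (plus surjectivity) forces $T$ to be genuinely additive and positively homogeneous on $\CzeroPlus{X}$. Additivity makes $T$ order-preserving, so $h^2 \le \V{h}\,h$ yields $\V{T(h)}^2 = \V{T(h^2)} \le \V{h}\,\V{T(h)}$ and hence $\V{T(h)}\le\V{h}$. One then extends linearly to all of $\Czero{X}$ via $\TildeT(f)=T(f_+)-T(f_-)$, obtains a bounded linear bijection, and uses the Open Mapping Theorem to get $\V{f}\le M\V{\TildeT(f)}$; iterating \eqref{eq:2} along the sequence $g^{2^n}$ with $\V{g}=1$ and taking $2^n$-th roots upgrades this to an isometry. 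The Banach--Stone theorem then produces $\tau$ directly. Your peaking-set construction of $\tau$ is a legitimate alternative to the Banach--Stone step and is standard in the norm-multiplicative literature, but it only gets off the ground once $\V{T(f)}=\V{f}$ is in hand, and that is where your proposal is missing the key idea.
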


This result extends the theorem of Dong, Lin, and Zheng \cite{GK-orig} to the setting of positive cones  $\CzeroPlus{X}$ and $\CzeroPlus{Y}$ for locally compact spaces.

The proof is divided into several lemmas.
The core idea is to extend the map $T$ from the positive cone $\CzeroPlus{X}$ to the entire space $\Czero{X}$.
To this end, we consider the positive and negative parts of functions in $\Czero{X}$.

\section{Proof of Main Theorem}

Throughout this section, let $T\colon \CzeroPlus{X} \to \CzeroPlus{Y}$ be a bijective map satisfying conditions \eqref{eq:1} and \eqref{eq:2}.
Hirota \cite{Hirota} proved that any surjective map from $\CzeroPlus{X}$ to $\CzeroPlus{Y}$ satisfying condition \eqref{eq:1} is additive and positively homogeneous.
We state this result as Lemma~\ref{lem:additive} below; see \cite{Hirota} for the proof.

% --- Lemma 1 ---
\begin{lem}[{Hirota~\cite[Theorem~1.1]{Hirota}}]
\label{lem:additive}
The map $T$ is additive and positively homogeneous;
that is, $T(f+g)=T(f)+T(g)$ and $T(rf)=rT(f)$
for all $f, g \in \CzeroPlus{X}$ and all $r \ge 0$.
\end{lem}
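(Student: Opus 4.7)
I would start with trivial normalization: putting $f=g=0$ into \eqref{eq:1} gives $\V{T(0)} = 2\V{T(0)}$, hence $T(0)=0$. Setting $g=f$ and iterating yields $\V{T(nf)} = n\V{T(f)}$ for $n\in\N$, and therefore $\V{T(rf)} = r\V{T(f)}$ for every $r\in\mathbb{Q}_{\geq 0}$; these are only norm-level identities but provide an initial foothold.

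The heart of the argument is additivity: for fixed $f,g\in\CzeroPlus{X}$, I would use surjectivity of $T$ to pick the unique $h\in\CzeroPlus{X}$ with $T(h)=T(f)+T(g)$, and then aim to show $h=f+g$. My plan is to play \eqref{eq:1} against several decompositions of a common sum $f+g+k$ for a variable test summand $k\in\CzeroPlus{X}$. The two most useful identities are
\[
  \V{T(h+k)} = \V{T(h)+T(k)} = \V{T(f)+T(g)+T(k)}, \qquad \V{T((f+g)+k)} = \V{T(f+g)+T(k)},
\]
together with $\V{T(f+g)} = \V{T(f)+T(g)}$ (the case $k=0$). Since $T$ is surjective, $T(k)$ ranges over all of $\CzeroPlus{Y}$, so I would choose $T(k)$ as a peak function concentrated near an arbitrary $y_0\in Y$ (using that $Y$ is locally compact Hausdorff), and argue that any pointwise discrepancy between $T(f+g)$ and $T(f)+T(g)$ at $y_0$ is amplified by the peak into a norm discrepancy, contradicting the identities above unless $T(h)=T(f+g)$. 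By injectivity of $T$, this forces $h=f+g$, so $T(f+g)=T(f)+T(g)$.

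Once additivity is established, $T(rf)=rT(f)$ on $\mathbb{Q}_{\geq 0}$ is immediate from $T(nf)=nT(f)$. Extension to $r\in\R_{\geq 0}$ then follows from monotonicity of $T$ (automatic from additivity, since $g\geq f$ yields $T(g)=T(f)+T(g-f)\geq T(f)$ pointwise) combined with a squeeze between rational approximations to $r$. The main obstacle is the additivity step: the hypothesis \eqref{eq:1} only compares sup-norms, which discard pointwise information, so the delicate point is leveraging bijectivity together with the richness of peak functions in the positive cone to transfer norm-level equalities into pointwise equality. This is precisely where Hirota's argument in \cite{Hirota} does the real technical work.
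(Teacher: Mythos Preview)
The paper does not supply its own proof of this lemma: it is quoted verbatim as Hirota's result, with the explicit remark ``see \cite{Hirota} for the proof.'' So there is nothing in the paper to compare your proposal against; any evaluation has to be on the proposal's internal merits.

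On those merits, the outline is reasonable at the level of normalization, rational homogeneity, and the passage from additivity to real homogeneity via monotonicity and squeezing. The genuine difficulty---which you yourself flag---is the additivity step, and there the argument as written has a gap. You display the two identities
\[
  \V{T(h+k)} = \V{T(f)+T(g)+T(k)}, \qquad \V{T((f+g)+k)} = \V{T(f+g)+T(k)},
\]
and then want to run a peak-function argument to conclude $T(f+g)=T(f)+T(g)$. But a peak-function argument needs $\V{T(f+g)+w}=\V{(T(f)+T(g))+w}$ for all $w\in\CzeroPlus{Y}$, i.e.\ (with $w=T(k)$) that $\V{T(f+g+k)}=\V{T(h+k)}$ for every $k$. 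Your two identities do not deliver this: they express each side separately in terms of $T$-values, but nothing links $\V{T(f+g+k)}$ to $\V{T(h+k)}$ without already knowing $f+g=h$. Condition \eqref{eq:1} lets you split a \emph{single} sum inside $T$ into two pieces at the norm level, but it does not let you iterate the splitting, since after one application you have $\V{T(f)+T(g+k)}$ and cannot split $T(g+k)$ further. Closing this circularity is exactly the nontrivial content of Hirota's theorem, and your plan does not indicate how to do it beyond deferring to \cite{Hirota}. That is fine as a citation, but it means the proposal is a sketch of the easy parts together with an acknowledgment that the hard part lies elsewhere, not an independent proof.
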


% --- Lemma 2 の導入文 ---
The additivity established in Lemma~\ref{lem:additive} implies the following property.

% --- Lemma 2 ---
\begin{lem} \label{lem:order_preserving}
The map $T$ is order-preserving; that is, if $f, g \in \CzeroPlus{X}$ and $f \le g$, then $T(f) \le T(g)$.
\end{lem}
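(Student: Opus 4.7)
The statement is an immediate consequence of additivity (Lemma~\ref{lem:additive}) combined with the fact that $T$ takes values in the positive cone $\CzeroPlus{Y}$. My plan is to write $g$ as a sum of two nonnegative functions in a way that lets additivity translate an inequality between $f$ and $g$ into a pointwise inequality between $T(f)$ and $T(g)$.

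Concretely, I would argue as follows. Suppose $f, g \in \CzeroPlus{X}$ with $f \le g$. Set $h = g - f$. Since $f \le g$ pointwise and both $f, g$ are continuous functions vanishing at infinity, $h$ is continuous, vanishes at infinity, and is nonnegative at every point of $X$; hence $h \in \CzeroPlus{X}$. Now apply Lemma~\ref{lem:additive} to the decomposition $g = f + h$ to obtain
\[
    T(g) = T(f+h) = T(f) + T(h).
\]
Because $T$ maps into $\CzeroPlus{Y}$, we have $T(h)(y) \ge 0$ for every $y \in Y$, and therefore $T(g)(y) = T(f)(y) + T(h)(y) \ge T(f)(y)$ for all $y \in Y$. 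This is exactly the pointwise inequality $T(f) \le T(g)$.

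There is no real obstacle here: the only ingredients are the closure of $\CzeroPlus{X}$ under subtraction when the inequality $f \le g$ holds, and the fact that $T$ lands in the positive cone. The multiplicative hypothesis \eqref{eq:2} plays no role in this lemma.
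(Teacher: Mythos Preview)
Your proof is correct and follows exactly the same approach as the paper: write $g = f + h$ with $h = g - f \in \CzeroPlus{X}$, apply the additivity from Lemma~\ref{lem:additive}, and use $T(h) \ge 0$ to conclude. The only differences are cosmetic---you spell out why $h \in \CzeroPlus{X}$ and remark that \eqref{eq:2} is unused---but the argument is identical.
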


% --- Lemma 2 の証明 ---
\begin{proof}
Let $f, g \in \CzeroPlus{X}$ satisfy $f \le g$.
Define $h = g - f$. Then $h \in \CzeroPlus{X}$
with $g = f + h$.
By the additivity of $T$ from Lemma~\ref{lem:additive},
$$
T(g) = T(f) + T(h).
$$
Since $T(h) \in \CzeroPlus{Y}$, it follows that
$T(g) - T(f) = T(h) \ge 0$.
Therefore, $T(f) \le T(g)$.
\end{proof}

% --- Lemma 3 の導入文  ---
We now construct the extension map $\TildeT$ from $\Czero{X}$ to $\Czero{Y}$.
For any $f \in \Czero{X}$, we set $f_+ = \max\{f, 0\}$ and $f_- = \max\{-f, 0\}$.
Then we can write $f = f_+ - f_-$.
We define the map $\TildeT\colon$ $\Czero{X} \to \Czero{Y}$ by
\begin{equation}
 \TildeT(f) = T(f_+) - T(f_-). \label{eq:TildeT_def}
\end{equation}
If $f \in \CzeroPlus{X}$, then $f_+ = f$ and $f_- = 0$.
Since $T(0) = 0$ by Lemma~\ref{lem:additive}, we have
$\TildeT(f) = T(f)$
for all $f \in \CzeroPlus{X}$.
Thus, $\TildeT$ and $T$ coincide on $\CzeroPlus{X}$.
The following lemma establishes the linearity of this extension.

%--- Lemma 3 ---
\begin{lem} \label{lem:extension}
The extension map $\TildeT$ defined in \eqref{eq:TildeT_def} is linear.
\end{lem}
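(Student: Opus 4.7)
The goal is to show that the extension $\tilde{T}(f)=T(f_+)-T(f_-)$ is both additive and homogeneous over all of $\R$. Since $f_+,f_-\in\CzeroPlus{X}$, each $T(f_\pm)\in\CzeroPlus{Y}\subset\Czero{Y}$, so $\tilde{T}$ is a well-defined map $\Czero{X}\to\Czero{Y}$. The plan is to leverage only two things already available: the additivity and positive homogeneity of $T$ on $\CzeroPlus{X}$ given by Lemma~\ref{lem:additive}.

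For additivity, fix $f,g\in\Czero{X}$ and compare the two Jordan-type decompositions of $f+g$: namely
\[
(f+g)_+ - (f+g)_- \;=\; f+g \;=\; (f_+ + g_+) - (f_- + g_-).
\]
Rearranging gives the pointwise identity $(f+g)_+ + f_- + g_- \;=\; (f+g)_- + f_+ + g_+$ between two elements of $\CzeroPlus{X}$. Applying $T$ and using additivity on $\CzeroPlus{X}$ yields
\[
T((f+g)_+) + T(f_-) + T(g_-) \;=\; T((f+g)_-) + T(f_+) + T(g_+),
\]
which is exactly $\tilde{T}(f+g)=\tilde{T}(f)+\tilde{T}(g)$ after rearranging.

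For homogeneity, I would split on the sign of the scalar $r$. If $r\ge 0$, then $(rf)_+=rf_+$ and $(rf)_-=rf_-$, so positive homogeneity of $T$ gives $\tilde{T}(rf)=rT(f_+)-rT(f_-)=r\tilde{T}(f)$ directly. If $r<0$, set $s=-r>0$; a pointwise check (splitting by the sign of $f(x)$) shows $(rf)_+ = sf_-$ and $(rf)_- = sf_+$, and then
\[
\tilde{T}(rf)=T(sf_-)-T(sf_+)=s\bigl(T(f_-)-T(f_+)\bigr)=-s\,\tilde{T}(f)=r\,\tilde{T}(f).
\]

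There is no deep obstacle here: the entire argument is formal manipulation of the decomposition $f=f_+-f_-$, and the only real content is the additivity identity $(f+g)_+ + f_- + g_- = (f+g)_- + f_+ + g_+$, which lets us avoid having to make sense of $T$ on functions that are not already positive. This mirrors the classical construction that extends a positively homogeneous additive map on a cone to a linear map on the ambient space.
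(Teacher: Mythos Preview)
Your proof is correct and follows essentially the same approach as the paper: the additivity argument via the rearranged identity $(f+g)_+ + f_- + g_- = (f+g)_- + f_+ + g_+$ in $\CzeroPlus{X}$, and the homogeneity argument by splitting on the sign of the scalar, are exactly what the paper does. The only differences are notational (the paper introduces $h=f+g$ and names the scalar $c$ rather than $r$).
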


%---Lemma 3 の証明 ---
\begin{proof}
%--- additivity ---
\textbf{Additivity}:
Let $f, g \in \Czero{X}$ and let $h = f + g$.
We show that $\TildeT(h) = \TildeT(f) + \TildeT(g)$.
We have the identity
$$
h_+ - h_- = h = f + g = (f_+ - f_-) + (g_+ - g_-).
$$
Rearranging terms, we have
$$
h_+ + f_- + g_- = h_- + f_+ + g_+.
$$
By the additivity of $T$ from Lemma~\ref{lem:additive}, we can apply $T$ to both sides, yielding
$$
T(h_+) + T(f_-) + T(g_-) = T(h_-) + T(f_+) + T(g_+).
$$
Rearranging terms of the last identity gives
$$
T(h_+) - T(h_-) = (T(f_+) - T(f_-)) + (T(g_+) - T(g_-)).
$$
By definition~\eqref{eq:TildeT_def}, this implies
$\TildeT(h) = \TildeT(f) + \TildeT(g)$.

%--- homogeneity ---

\textbf{Homogeneity}:
We show that $\TildeT(cf) = c\TildeT(f)$ for all $c \in \mathbb{R}$ and $f \in \Czero{X}$.

\textbf{Case 1.} If $c \ge 0$, then $(cf)_+ = c \cdot f_+$ and $(cf)_- = c \cdot f_-$. 
By definition~\eqref{eq:TildeT_def} and the positive homogeneity of $T$
from Lemma~\ref{lem:additive}, we obtain
\[
\TildeT(cf) = T((cf)_+) - T((cf)_-)
            = c T(f_+) - c T(f_-)
            = c \TildeT(f).
\]

\textbf{Case 2.} If $c < 0$, let $r = -c$. Then $r > 0$.
Note that $(-f)_+ = f_-$ and $(-f)_- = f_+$.
Consequently, $(cf)_+ = (-rf)_+ = r \cdot f_-$ and $(cf)_- = (-rf)_- = r \cdot f_+$.
It follows that
\[
\TildeT(cf)
            = T(rf_-) - T(rf_+)
            = r T(f_-) - r T(f_+)
            = -r (T(f_+) - T(f_-))
            = c \TildeT(f).
\]
Thus, $\TildeT$ is homogeneous.
\end{proof}

%\textbf{Homogeneity}:
%We show that $\TildeT(cf) = c\TildeT(f)$ for all $c \in \mathbb{R}$ and $f \in \Czero{X}$.
%\begin{itemize}
%\item Case 1: $c \ge 0$.
%Since $c \ge 0$, we have $(cf)_+ = c \cdot f_+$ and $(cf)_- = c \cdot f_-$. 
%By definition~\eqref{eq:TildeT_def} and the positive homogeneity of $T$
%from Lemma~\ref{lem:additive}, we obtain
%\[
%\TildeT(cf) = T((cf)_+) - T((cf)_-)
%            = c T(f_+) - c T(f_-)
%            = c \TildeT(f).
%\]
%\item Case 2: $c < 0$.
%Let $c = -r$, where $r > 0$. Note that $(-f)_+ = f_-$ and $(-f)_- = f_+$.
%Consequently, $(cf)_+ = (-rf)_+ = r \cdot f_-$ and $(cf)_- = (-rf)_- = r \cdot f_+$.
%It follows that
%\[
%\TildeT(cf)
%            = r T(f_-) - r T(f_+)
%            = -r \TildeT(f)
%            = c \TildeT(f).
%\]
%\end{itemize}
%Thus, $\TildeT$ is homogeneous.
%\end{proof}

%---Lemma 4 導入文

Once the linearity of $\tilde{T}$ has been established, it remains to show that $\tilde{T}$ is bijective.

%--- Lemma4 ---
\begin{lem} \label{lem:bijectivity}
The extension map $\TildeT$ is bijective.
\end{lem}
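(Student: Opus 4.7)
The plan is to establish injectivity and surjectivity of $\TildeT$ separately, exploiting the additivity from Lemma~\ref{lem:additive} together with the multiplicative norm condition \eqref{eq:2}.

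\textbf{Injectivity.} Suppose $\TildeT(f)=\TildeT(g)$ for $f,g\in\Czero{X}$. By the definition \eqref{eq:TildeT_def}, $T(f_+)-T(f_-)=T(g_+)-T(g_-)$, hence $T(f_+)+T(g_-)=T(g_+)+T(f_-)$. The additivity of $T$ on $\CzeroPlus{X}$ (Lemma~\ref{lem:additive}) rewrites this as $T(f_++g_-)=T(g_++f_-)$, and the injectivity of $T$ then gives $f_++g_-=g_++f_-$, i.e., $f=f_+-f_-=g_+-g_-=g$.

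\textbf{Surjectivity.} Let $k\in\Czero{Y}$ and write $k=k_+-k_-$. Since $T$ is surjective, there exist $u,v\in\CzeroPlus{X}$ with $T(u)=k_+$ and $T(v)=k_-$. The main obstacle is that to recover $k$ as $\TildeT(u-v)$ the functions $u$ and $v$ must act as the positive and negative parts of a single element of $\Czero{X}$, which forces the disjointness condition $u\cdot v=0$. This is exactly where condition \eqref{eq:2} comes in: because $k_+ k_-\equiv 0$ on $Y$, one has
\[
\V{T(u\cdot v)}=\V{T(u)\cdot T(v)}=\V{k_+\cdot k_-}=0,
\]
so $T(u\cdot v)=0=T(0)$, and injectivity of $T$ forces $u\cdot v=0$. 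Setting $f=u-v$, a pointwise case analysis according to whether $u(x)>0$, $v(x)>0$, or both vanish shows that $f_+=u$ and $f_-=v$; therefore $\TildeT(f)=T(u)-T(v)=k_+-k_-=k$.

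I expect the injectivity half to be essentially automatic once additivity is in hand. The surjectivity half is the substantive step, precisely because one must arrange the preimages $u$ and $v$ to have disjoint supports, and condition \eqref{eq:2} is the exact ingredient that settles this.
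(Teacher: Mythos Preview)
Your injectivity argument is identical to the paper's. For surjectivity, however, you and the paper diverge. You go to some trouble to show that the preimages $u,v$ satisfy $u\cdot v=0$ (invoking the multiplicative norm condition \eqref{eq:2}) so that $u=(u-v)_+$ and $v=(u-v)_-$, allowing you to compute $\TildeT(u-v)$ straight from the definition \eqref{eq:TildeT_def}. The paper bypasses this entirely: since Lemma~\ref{lem:extension} has already established that $\TildeT$ is \emph{linear}, one simply sets $f=f_1-f_2$ and computes $\TildeT(f)=\TildeT(f_1)-\TildeT(f_2)=T(f_1)-T(f_2)=u_+-u_-=u$, with no need for disjointness and no appeal to \eqref{eq:2}. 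Your argument is correct and has the virtue of being independent of Lemma~\ref{lem:extension}, but in the paper's logical order the linearity is already available, so the ``main obstacle'' you identify is not actually present.
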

%--- intectivity ---
\begin{proof}
\textbf{Injectivity}:
Assume $\TildeT(f) = \TildeT(g)$.
By definition~\eqref{eq:TildeT_def}, this means
$T(f_+) - T(f_-) = T(g_+) - T(g_-)$.
By the additivity of $T$ from Lemma~\ref{lem:additive}, we have
$T(f_+ + g_-) = T(g_+ + f_-)$.
Since $T$ is injective, it follows that
$f_+ + g_- = g_+ + f_-$,
which implies $f_+ - f_- = g_+ - g_-$, and therefore $f = g$.
Thus, $\TildeT$ is injective.

%--- surjectivity ---
\textbf{Surjectivity}:
Let $u \in \Czero{Y}$. We write $u = u_+ - u_-$.
Since $T\colon \CzeroPlus{X} \to \CzeroPlus{Y}$ is surjective,
there exist $f_1, f_2 \in \CzeroPlus{X}$ such that
$T(f_1) = u_+$ and $T(f_2) = u_-$.
Define $f = f_1 - f_2$. Then $f \in \Czero{X}$.
By the linearity of $\TildeT$ and the fact that
$\TildeT = T$ on $\CzeroPlus{X}$, we obtain
\[
\TildeT(f) = \TildeT(f_1) - \TildeT(f_2)
           = T(f_1) - T(f_2)
           = u_+ - u_- = u.
\]
Thus, $\TildeT$ is surjective.
\end{proof}

%--- Lemma 5 の導入文---
Next, we recall a standard norm identity for arbitrary $f \in \Czero{X}$.
The identity
\begin{equation}
 \V{f} = \max\{\V{f_+}, \V{f_-}\} \label{eq:f_norm}
\end{equation}
is a standard property of the supremum norm on $\Czero{X}$.
We now show that the extended map $\TildeT$ satisfies an analogous identity.

%---Lemma 5 ---
\begin{lem}
The following identity holds for all $f \in \Czero{X}$:
\begin{equation}
 \V{\TildeT(f)} = \max\{\V{T(f_+)}, \V{T(f_-)}\}. \label{eq:Tf_norm}
\end{equation}
\end{lem}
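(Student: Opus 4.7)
The plan is to reduce \eqref{eq:Tf_norm} to the assertion that $T(f_+)$ and $T(f_-)$ have disjoint pointwise supports on $Y$: at every $y \in Y$ at least one of $T(f_+)(y)$ and $T(f_-)(y)$ is zero. Once this disjointness is available, the identity follows from the elementary fact that, for non-negative reals $a, b$ with $ab = 0$, $|a - b| = \max\{a, b\}$; applying this pointwise and then taking the supremum over $Y$ converts $\V{\TildeT(f)} = \V{T(f_+) - T(f_-)}$ into $\max\{\V{T(f_+)}, \V{T(f_-)}\}$.

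The disjointness itself is obtained from the multiplicative norm condition \eqref{eq:2}, which has been unused up to this point. Since $f_+ \cdot f_- \equiv 0$ on $X$, we have $T(f_+ \cdot f_-) = T(0) = 0$ by Lemma~\ref{lem:additive}. Hence by \eqref{eq:2},
$$\V{T(f_+) \cdot T(f_-)} = \V{T(f_+ \cdot f_-)} = 0.$$
Since $T(f_+), T(f_-) \in \CzeroPlus{Y}$, their pointwise product is non-negative, so vanishing of its supremum norm forces it to vanish identically on $Y$, which is exactly the desired pointwise disjointness.

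There is no serious obstacle here; the only subtlety is recognizing that this is the first place where the multiplicative norm condition \eqref{eq:2} actually plays a role, and that it must be used specifically on the pair $(f_+, f_-)$ because their product is zero, which is what allows the norm condition to collapse into a pointwise vanishing statement rather than a mere inequality.
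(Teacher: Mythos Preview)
Your proposal is correct and follows essentially the same approach as the paper: both derive $T(f_+)\cdot T(f_-)=0$ from the multiplicative norm condition \eqref{eq:2} applied to the pair $(f_+,f_-)$ with $f_+f_-=0$, then use the pointwise identity $|a-b|=\max\{a,b\}$ for non-negative reals with $ab=0$ and pass to the supremum. The paper spells out the passage from $\sup_y\max\{T(f_+)(y),T(f_-)(y)\}$ to $\max\{\V{T(f_+)},\V{T(f_-)}\}$ via two inequalities, whereas you invoke it as an elementary fact, but there is no substantive difference.
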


%--- Lemma 5 証明

\begin{proof}
Let $f \in \Czero{X}$.
From condition~\eqref{eq:2} and Lemma~\ref{lem:additive}, we have
\[
\V{T(f_+) \cdot T(f_-)} = \V{T(f_+ \cdot f_-)} = \V{T(0)} = 0.
\]
This implies $T(f_+)\cdot T(f_-)=0$.
Since $T$ maps $\CzeroPlus{X}$ into $\CzeroPlus{Y}$, we have $T(f_+) \ge 0$ and $T(f_-) \ge 0$.
Thus, for any $y \in Y$, at least one of $T(f_+)(y)$ or $T(f_-)(y)$ is zero.
Consequently,
\begin{align*}
|\TildeT(f)(y)|
&= |T(f_+)(y) - T(f_-)(y)| \\
&= \max\{T(f_+)(y), T(f_-)(y)\}.
\end{align*}
Taking the supremum over all $y \in Y$, we obtain
\[
\V{\TildeT(f)} = \sup_{y \in Y} \max\{T(f_+)(y), T(f_-)(y)\} \le \max\{\V{T(f_+)}, \V{T(f_-)}\}.
\]
To prove the reverse inequality, observe that for any $y \in Y$,
\[
\max\{T(f_+)(y), T(f_-)(y)\} \ge T(f_+)(y),T(f_-)(y).
\]
Taking the supremum over all $y \in Y$, we obtain $\V{\TildeT(f)} \ge \V{T(f_+)},\V{T(f_-)}$.
Therefore,
\[
\V{\TildeT(f)} \ge \max\{\V{T(f_+)}, \V{T(f_-)}\}.
\]
Combining these inequalities, we conclude that
\[
\V{\TildeT(f)} = \max\{\V{T(f_+)}, \V{T(f_-)}\},
\]
which establishes identity~\eqref{eq:Tf_norm}.
\end{proof}

Having established the linearity and bijectivity of $\TildeT$, we now examine the norm of $\TildeT$.

% --- Lemma 6---
\begin{lem} \label{lem:bounded}
The map $\TildeT$ satisfies the inequality
$\V{\TildeT(f)} \le \V{f}$ for all $f \in \Czero{X}$.
\end{lem}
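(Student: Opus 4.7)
The plan is to reduce the inequality to the positive cone and then extract the bound from the multiplicative condition~\eqref{eq:2} via a simple squaring trick. First I would observe that the norm identities \eqref{eq:f_norm} and \eqref{eq:Tf_norm} already match in shape: since $\V{f} = \max\{\V{f_+}, \V{f_-}\}$ and $\V{\TildeT(f)} = \max\{\V{T(f_+)}, \V{T(f_-)}\}$, it suffices to prove the positive-cone estimate $\V{T(g)} \le \V{g}$ for every $g \in \CzeroPlus{X}$; applying this to $g = f_+$ and $g = f_-$ and taking the maximum then finishes the proof.

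For the positive-cone estimate I would argue as follows. The case $g = 0$ is trivial since $T(0) = 0$. For $g \ne 0$, normalize by setting $\tilde{g} = g / \V{g}$, so that $0 \le \tilde{g}(x) \le 1$ for every $x \in X$. This gives the pointwise inequality $\tilde{g}^2 \le \tilde{g}$ in $\CzeroPlus{X}$. By the order-preservation of $T$ (Lemma~\ref{lem:order_preserving}), $T(\tilde{g}^2) \le T(\tilde{g})$, and taking supremum norms yields $\V{T(\tilde{g}^2)} \le \V{T(\tilde{g})}$. The multiplicative norm condition~\eqref{eq:2} then rewrites the left-hand side as
\[
\V{T(\tilde{g}^2)} = \V{T(\tilde{g}) \cdot T(\tilde{g})} = \V{T(\tilde{g})}^2,
\]
where the last equality is the standard identity $\V{h^2} = \V{h}^2$ for real continuous functions. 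Hence $\V{T(\tilde{g})}^2 \le \V{T(\tilde{g})}$, which forces $\V{T(\tilde{g})} \le 1$.

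By the positive homogeneity of $T$ (Lemma~\ref{lem:additive}), $T(g) = \V{g}\, T(\tilde{g})$, so $\V{T(g)} = \V{g}\,\V{T(\tilde{g})} \le \V{g}$, establishing the positive-cone estimate. I do not anticipate a serious obstacle: the only real insight required is recognising that the combination of order-preservation with the multiplicative norm identity~\eqref{eq:2} yields the norm bound via the one-line observation $\tilde{g}^2 \le \tilde{g}$ whenever $0 \le \tilde{g} \le 1$.
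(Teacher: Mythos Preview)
Your proof is correct and follows essentially the same approach as the paper: both reduce to the positive cone via \eqref{eq:f_norm} and \eqref{eq:Tf_norm}, and both exploit order preservation together with condition~\eqref{eq:2} applied to a square to obtain the bound. The only cosmetic difference is that you normalize first (using $\tilde{g}^2 \le \tilde{g}$), whereas the paper works directly with $h^2 \le \V{h}\,h$; your variant has the minor advantage of not needing to cite injectivity of $T$ to justify dividing by $\V{T(h)}$.
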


%--- Lemma 6 証明---
\begin{proof}
First, we show that $\V{T(h)} \le \V{h}$ for all $h \in \CzeroPlus{X}$.
Let $h \in \CzeroPlus{X}$. Then $0\le h^2 \le \V{h} h$.
Since $T$ is order-preserving by Lemma \ref{lem:order_preserving} and positively homogeneous by Lemma \ref{lem:additive}, we apply $T$ to the inequality, obtaining
$0\le T(h^2) \le T(\V{h} h) = \V{h} T(h)$.
Taking the norm of both sides yields
$\V{T(h^2)} \le \V{h} \cdot \V{T(h)}$.
By assumption \eqref{eq:2}, $\V{T(h^2)}$ equals $\V{T(h)}^2$.
Substituting this, we obtain
$$
\V{T(h)}^2 \le \V{h} \cdot \V{T(h)} .
$$
If $h \ne 0$, then $T(h) \ne 0$ because $T$ is injective.
We can thus divide the last inequality by $\V{T(h)}$ to obtain
$\V{T(h)} \le \V{h}$.
This inequality holds trivially if $h = 0$ since $T(0)=0$.
Hence, the inequality $\V{T(h)} \le \V{h}$ holds for all $h \in \CzeroPlus{X}$.

Next, we show $\V{\TildeT(f)} \le \V{f}$ for all $f \in \Czero{X}$.
From identity \eqref{eq:Tf_norm}, we have
$\V{\TildeT(f)} = \max\{\V{T(f_+)}, \V{T(f_-)}\}$.
As shown in the first part of the proof, we know that
$\V{T(f_+)} \le \V{f_+}$ and $\V{T(f_-)} \le \V{f_-}$.
Therefore,
$\V{\TildeT(f)} \le \max\{\V{f_+}, \V{f_-}\}$.
Using identity \eqref{eq:f_norm}, we conclude that
$\V{\TildeT(f)} \le \V{f}$.
\end{proof}

%--- Lemma 7 導入文 ---
We have established that $\TildeT$ is a bounded linear bijection between the Banach spaces $\Czero{X}$ and $\Czero{Y}$.
This allows us to apply the Open Mapping Theorem.

% --- Lemma 7 ---
\begin{lem} \label{lem:isometry}
$\TildeT$ is an isometry.
\end{lem}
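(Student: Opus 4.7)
The strategy is to apply the Open Mapping Theorem to the bounded linear bijection $\TildeT$, obtaining a constant $M>0$ with $\V{f} \le M\V{\TildeT(f)}$ for every $f \in \Czero{X}$, and then bootstrap this into the sharp inequality $\V{f} \le \V{\TildeT(f)}$ using the multiplicativity of the supremum norm on positive powers.

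First, I would invoke the Open Mapping Theorem for the Banach spaces $\Czero{X}$ and $\Czero{Y}$. By Lemmas~\ref{lem:extension}, \ref{lem:bijectivity}, and \ref{lem:bounded}, the map $\TildeT$ is a continuous linear bijection, so $\TildeT^{-1}$ is bounded: there exists $M>0$ with $\V{f} \le M\V{\TildeT(f)}$ for all $f \in \Czero{X}$. Restricted to $h \in \CzeroPlus{X}$, where $\TildeT$ agrees with $T$, this reads $\V{h} \le M\V{T(h)}$.

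The crucial bootstrap iterates condition \eqref{eq:2} on squares. Since $T(h) \in \CzeroPlus{Y}$, the identity $\V{T(h)\cdot T(h)} = \V{T(h)}^2$ holds, and \eqref{eq:2} gives $\V{T(h^2)} = \V{T(h)}^2$. Iterating on $h^2, h^4, \dots$ yields $\V{T(h^{2^n})} = \V{T(h)}^{2^n}$ for every $n\ge 0$, while $\V{h^{2^n}} = \V{h}^{2^n}$ by the standard identity for positive functions. Applying the Open Mapping bound with $h^{2^n}$ in place of $h$ gives $\V{h}^{2^n} \le M\V{T(h)}^{2^n}$, so $\V{h} \le M^{1/2^n}\V{T(h)}$. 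Letting $n\to\infty$ produces $\V{h} \le \V{T(h)}$; combined with the reverse inequality of Lemma~\ref{lem:bounded}, this yields $\V{T(h)}=\V{h}$ for all $h\in\CzeroPlus{X}$.

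Finally, for general $f \in \Czero{X}$, identities \eqref{eq:f_norm} and \eqref{eq:Tf_norm} give
\[
\V{\TildeT(f)} = \max\{\V{T(f_+)},\V{T(f_-)}\} = \max\{\V{f_+},\V{f_-}\} = \V{f},
\]
so $\TildeT$ is an isometry. The main obstacle is that the hypotheses provide no direct lower bound on $\V{T(h)}$ in terms of $\V{h}$: the Open Mapping Theorem supplies only a bound up to a constant $M$, and the multiplicativity of the supremum norm under repeated squaring is precisely what erases $M$ in the $n\to\infty$ limit.
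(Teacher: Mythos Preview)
Your proof is correct and follows essentially the same route as the paper's: invoke the Open Mapping Theorem to get a lower bound up to a constant $M$, iterate condition \eqref{eq:2} on repeated squares to obtain $\V{T(h^{2^n})}=\V{T(h)}^{2^n}$, take $2^n$-th roots, and let $n\to\infty$ to absorb $M$; then pass to general $f$ via \eqref{eq:f_norm} and \eqref{eq:Tf_norm}. The only cosmetic difference is that the paper first normalizes $h$ to $g=h/\V{h}$ with $\V{g}=1$ before iterating, whereas you work directly with $h$; both versions are valid.
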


%--- Lemma 7 証明---
\begin{proof}
Since $\TildeT$ is a bounded linear bijection between the Banach spaces $\Czero{X}$ and $\Czero{Y}$, the Open Mapping Theorem implies that the inverse map $\TildeT^{-1}$ is bounded.
Therefore, there exists a constant $M > 0$ such that
$$
\V{f} \le M\V{\TildeT(f)} \quad (f \in \Czero{X}).
$$

We first show that $T$ preserves the norm on the positive cone $\CzeroPlus{X}$.
Let $h \in \CzeroPlus{X} \setminus \{0\}$.
Setting $g = h / \V{h}$, we have $\V{g} = 1$.
We apply condition \eqref{eq:2} to show that
$\V{T(g^2)} = \V{T(g)}^2$.
By induction, we obtain the following identity for all $n \in \mathbb{N}$:
$$
\V{T(g^{2^n})} = \V{T(g)}^{2^n}.
$$
Let $n \in \mathbb{N}$ be fixed.
It follows from the previous inequality that
$$
1 = \V{g^{2^n}} \le M\V{\TildeT(g^{2^n})}
= M\V{T(g^{2^n})}
= M\V{T(g)}^{2^n}.
$$
Taking the $2^n$-th root on both sides yields
$1 \le M^{1/2^n} \V{T(g)}$.
Letting $n \to \infty$, we obtain $1 \le \V{T(g)}$.
Since $\V{T(g)} = \V{\TildeT(g)} \le \V{g} = 1$
by Lemma \ref{lem:bounded},
we conclude that $\V{T(g)} = 1$.
By the positive homogeneity of $T$ in Lemma \ref{lem:additive},
this implies $\V{T(h)} = \V{h}$ for all $h \in \CzeroPlus{X}$.

Finally, we prove that $\TildeT$ preserves the norm on $\Czero{X}$.
Let $f \in \Czero{X}$ be arbitrary.
Combining identities \eqref{eq:f_norm} and \eqref{eq:Tf_norm}
with $\V{T(h)} = \V{h}$ for $h \in \CzeroPlus{X}$,
we directly obtain:
$$
\V{\TildeT(f)}
= \max\{\V{T(f_+)}, \V{T(f_-)}\}
= \max\{\V{f_+}, \V{f_-}\}
= \V{f}.
$$
Since $\TildeT$ is linear, this equality confirms that $\TildeT$ is an isometry.
\end{proof}

%--- proof of main theoremの導入文
With the properties of the extension map $\TildeT$ established in the preceding lemmas,
we are now ready to prove the main result.

% --- Proof of Main Theorem ---
\begin{proof}[\textbf{Proof of Theorem}]
By Lemma \ref{lem:isometry}, $\TildeT\colon \Czero{X} \to \Czero{Y}$ is a surjective linear isometry.
By the Banach--Stone Theorem \cite{BanachStone, Wang95}, there exist 
a continuous function $\alpha\colon Y \to \{-1, 1\}$ and a homeomorphism $\tau\colon Y \to X$ such that
$$
\TildeT(f)(y) = \alpha(y) f(\tau(y)) \quad (f \in \Czero{X}, \, y \in Y).
$$
Recall that $\TildeT = T$ on $\CzeroPlus{X}$.
We show that $\alpha = 1$ on $Y$.

Let $y_0 \in Y$ be arbitrary.
By Urysohn's lemma, we can choose a function $f_0 \in \CzeroPlus{X}$ such that
$f_0(\tau(y_0)) = 1$.
Applying the previous equality, we obtain
$$ \alpha(y_0)=\alpha(y_0)\cdot 1=\alpha(y_0)f_0(\tau(y_0))
=T(f_0)(y_0)\geq0 ,
$$
because $T(f_0)\in\CzeroPlus{Y}$.
Since $\alpha(y_0) \in \{-1, 1\}$, this forces $\alpha(y_0) = 1$.
Since $y_0$ was arbitrary, we conclude that $\alpha(y) = 1$ for all $y \in Y$.

Thus, the original map $T$ is given by
$T(f) = f \circ \tau$ for all $f \in \CzeroPlus{X}$.
\end{proof}

\begin{ack}
The first author was partially supported by
JSPS KAKENHI Grant Number JP 25K07028.
\end{ack}

% --- Bibliography ---

\end{document}